\documentclass{scrartcl}
\KOMAoptions{paper=a4, DIV=calc}

\usepackage{stix2}
\usepackage{microtype}

\usepackage[hang,flushmargin]{footmisc}

\usepackage[font=small,labelfont=bf]{caption}
\usepackage{xcolor}

\usepackage[
  backend=biber,
  style=alphabetic,
  sorting=ynt
]{biblatex}
\usepackage{xurl}
\usepackage{hyperref}

\AtEveryBibitem{\sloppy}

\usepackage{amsthm}
\usepackage{mathtools}

\usepackage{tabularx}
\usepackage{multirow}

\usepackage{algorithm}
\usepackage{algpseudocode}
\algrenewcommand\alglinenumber[1]{}

\usepackage{listings}

\lstdefinestyle{python}{
  language=Python,
  basicstyle=\ttfamily\small,
  breaklines=true,
  frame=single,
  tabsize=4,
}

\setkomafont{disposition}{\normalfont\bfseries}
\setkomafont{descriptionlabel}{\normalfont\bfseries}

\DeclareMathOperator{\im}{im}
\DeclareMathOperator{\hfk}{HFK}
\DeclareMathOperator{\gc}{GC}
\DeclareMathOperator{\gh}{GH}

\newcommand{\G}{\mathbb{G}}
\newcommand{\X}{\mathbb{X}}
\newcommand{\Oh}{\mathbb{O}}
\newcommand{\Z}{\mathbb{Z}}

\newcommand{\col}{\mathscr{c}}
\newcommand{\row}{\mathscr{r}}

\newcommand{\Rect}{\operatorname{Rect}(x,y)}
\newcommand{\eRect}{\operatorname{Rect}^\circ(x,y)}

\newcommand{\sbc}{\widehat{\gc}(\G)}
\newcommand{\sbh}{\widehat{\gh}(\G)}

\newcommand{\sbhds}[2]{\widehat{\gh}_{#1}(\G, #2)}
\newcommand{\sbdifferential}{\hat{\partial}_{\X, O_n}}

\newtheoremstyle{mystyle}
  {3pt}{3pt}
  {}
  {}
  {\bfseries}
  {}
  {.5em}
  {\thmname{#1}\thmnumber{ #2.}\thmnote{ (#3)}}

\theoremstyle{mystyle}
\newtheorem{theorem}{Theorem}[section]
\newtheorem{proposition}[theorem]{Proposition}
\newtheorem*{proposition*}{Proposition}
\newtheorem{lemma}[theorem]{Lemma}

\newtheorem{definition}[theorem]{Definition}
\newtheorem{example}[theorem]{Example}
\newtheorem{question}[theorem]{Question}
\newtheorem*{question*}{Question}

\title{Grid Diagrams of Fibered Knots}
\author{Paul Leon Itzlinger}
\date{}

\begin{document}

\maketitle

\addsec{Abstract}

Grid diagrams are special representations of knots in the three-sphere that are used to define a combinatorial version of knot Floer homology. Paolo Ghiggini and Yi Ni showed that knot Floer homology detects fibered knots \cite{ghiggini2008knot,ni2007knot}. Their results imply, in particular, that grid diagrams with a unique grid state whose Alexander grading is maximal only exist for fibered knots. Whether every fibered knot admits such a diagram remains an open question \cite[Open Problem 17.1.5]{ozsvath2015grid}.

Here, we investigate the existence of such special grid diagrams for fibered knots. We develop an efficient method for deciding whether a given grid diagram meets the even stricter condition of having a unique grid state that realizes an upper bound for the Alexander function. By implementing this method in a Python package, we find suitable grid diagrams for 5385 of the 5397 fibered prime knots with crossing number $\leq 13$.

\addsec{Acknowledgements}

I am very grateful to my master's thesis advisor, Prof. Vera Vértesi, who introduced me to this subject and whose continuous support was invaluable throughout the project.

\newpage

\addsec{Introduction}

Knot Floer homology is an invariant for links in three-manifolds, discovered by Ozsváth, Szabó, and Rasmussen \cite{ozsvath2004holomorphic_1, rasmussen2003floer}. The subject is based on Heegaard-Floer homology, an invariant for closed, orientable three-manifolds that uses tools from symplectic geometry. Knot Floer homology relies on counting pseudo-holomorphic disks. Sarkar and Wang discovered a combinatorial approach to counting these disks in special cases \cite{sarkar2010algorithm}. Based on these ideas, Manolescu, Ozsváth, Sarkar, and Thurston developed a combinatorial description of knot Floer homology for knots in the three-sphere $S^3$ \cite{manolescu2007combinatorial, manolescu2009combinatorial} out of which an independent, purely combinatorial homology theory for knots in $S^3$, called grid homology, was created \cite{ozsvath2015grid}.

For a knot $K$ in $S^3$, there exists a bigraded isomorphism between simply blocked grid homology $\widehat{\gh}(K)$ and knot Floer homology $\widehat{\hfk}(K)$ \cite{manolescu2009combinatorial}. Knot Floer homology is a powerful tool for investigating topological properties of knots, as it can be used to compute the Seifert genus and detect if a knot is fibered. Indeed, Ozsváth and Szabó showed that the Seifert genus is equal to the maximal integer $s$ for which $\widehat{\hfk}_*(K, s) = \bigoplus_d \widehat{\hfk}_d(K, s)$ is non-zero \cite{ozsvath2004holomorphic}. Ghiggini and Ni proved that a knot $K$ with Seifert genus $g$ is fibered if and only if $\dim \widehat{\hfk}_*(K, g) = 1$ \cite{ni2007knot, ghiggini2008knot}.

From these results, the following proposition follows. It uses the formalism of grid homology; precise definitions are recalled in the \emph{Notation and Prerequisites} section below.

\begin{proposition*}[I]
    \phantomsection\label{prop: converse}
	If there exists a grid diagram $\G$ representing the knot $K$ such that there exists a unique grid state $x \in S(\G)$ whose Alexander grading is maximal, then $K$ is fibered and has Seifert genus $A(x)$.
\end{proposition*}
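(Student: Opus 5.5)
The plan is to use the fully blocked grid complex $\widetilde{\gc}(\G)$, whose homology is related to knot Floer homology by a known tensor-product formula, and to read off the top Alexander grading directly from the chain level. Recall that $\widetilde{\gc}(\G)$ is the $\mathbb{F}_2$-vector space freely generated by the grid states $S(\G)$. Its differential counts empty rectangles that contain no markings from $\X$ and none from $\Oh$.

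The first step is to observe that this differential preserves the Alexander grading. Let $r$ be a rectangle from $x$ to $y$, and let $O_i(r)$, $X_i(r)$ denote the number of $O$- and $X$-markings inside $r$. The standard formula is $A(x) - A(y) = \#(r\cap \X) - \#(r\cap\Oh)$. Since rectangles counted in $\widetilde{\gc}$ contain no markings, both terms vanish. Hence $\widetilde{\gc}(\G)$ splits as a direct sum of subcomplexes $\widetilde{\gc}(\G, s)$, one for each Alexander grading $s$.

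Let $a = A(x)$ be the maximal Alexander grading. By hypothesis:
\begin{itemize}
\item $\widetilde{\gc}(\G, s) = 0$ for $s > a$;
\item $\widetilde{\gc}(\G, a)$ is one-dimensional, spanned by $x$.
\end{itemize}
The differential on a one-dimensional complex over $\mathbb{F}_2$ that preserves the Maslov-type bigrading must vanish. Indeed, $\partial$ lowers the Maslov grading by one, and $\partial^2 = 0$. Therefore the homology $\widetilde{\gh}(\G)$ vanishes in Alexander gradings above $a$ and is one-dimensional in grading $a$.

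The second step invokes the structure theorem from grid homology (Ozsváth--Stipsicz--Szabó, via Manolescu--Ozsváth--Sarkar). For an $n\times n$ grid diagram $\G$ of $K$,
\[
\widetilde{\gh}(\G)\cong \widehat{\gh}(K)\otimes W^{\otimes (n-1)}\cong \widehat{\hfk}(K)\otimes W^{\otimes(n-1)},
\]
where $W$ is two-dimensional with generators in bigradings $(0,0)$ and $(-1,-1)$. Tensoring with $W$ only produces copies shifted down in Alexander grading, together with one unshifted copy. Hence the top nonzero Alexander grading of $\widetilde{\gh}(\G)$ equals the top nonzero Alexander grading of $\widehat{\hfk}(K)$. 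Moreover, the dimensions in that top grading agree.

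By Ozsváth--Szabó, this top grading is the Seifert genus $g$. Here we use that $\widehat{\hfk}(K)\neq 0$, so a top grading exists. Comparing with the first step gives $g = a = A(x)$ and $\dim \widehat{\hfk}_*(K,g) = 1$. By Ghiggini and Ni, $K$ is fibered.

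The main point requiring care is the tensor-product identification, specifically that the $W$ factors do not create new classes in higher Alexander gradings. One must also use that the one-unshifted-copy count gives exactly the dimension of $\widehat{\hfk}(K,g)$. I would cite the relevant theorem from \cite{ozsvath2015grid} rather than reprove it. If the paper prefers to work only with the simply blocked complex $\sbc$, the same argument goes through, with one caveat: the differential $\sbdifferential$ may pass through $O$'s other than $O_n$, lowering the Alexander grading. In that case the top grading piece is still a quotient complex that is one-dimensional. The homology in the top grading is then still spanned by $[x]$, provided no differential lands on $x$. This holds because such a differential would have to come from a higher grading, and there are no states in higher gradings.
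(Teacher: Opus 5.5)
Your argument is correct, but it goes through a different complex than the paper's proof.

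\textbf{The paper's route.} The paper works directly with $\sbc$ in its presentation over $\mathbb{F}[V_1,\dots,V_{n-1}]$ from \cite[Rem.~4.6.13]{ozsvath2015grid}. There each $V_i$ lowers the Alexander grading by one. So a basis element $V_1^{k_1}\cdots V_{n-1}^{k_{n-1}}\cdot y$ of Alexander grading $A(x)$ forces $A(y)\geq A(x)$, hence $y=x$ and all $k_i=0$. Thus $\widehat{\gc}_*(\G,A(x))=\mathbb{F}\cdot x$ with vanishing differential, and $\sbh\cong\widehat{\hfk}(K)$ applies directly, with no correction factor.

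\textbf{Your route.} You use the fully blocked complex $\widetilde{\gc}(\G)$. This makes the chain-level step completely transparent: the top Alexander summand is literally the span of $x$, with no $V$-multiples to rule out. The price is an extra input, $\widetilde{\gh}(\G)\cong\widehat{\gh}(K)\otimes W^{\otimes(n-1)}$. You also need the short check that tensoring with $W^{\otimes(n-1)}$ changes neither the top Alexander grading nor its dimension, because $W^{\otimes(n-1)}$ has a single generator in Alexander grading $0$ and all others in negative gradings. Your appeal to $\widehat{\hfk}(K)\neq 0$ is not even needed: nonvanishing of $\widetilde{\gh}(\G)$ in grading $A(x)$ already forces $\widehat{\gh}(K)\neq 0$. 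Both routes then finish identically with Ozsváth--Szabó and Ghiggini--Ni.

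\textbf{Correction to your closing aside.} $\sbdifferential$ does not lower the Alexander grading. A rectangle $r$ avoiding $\X$ goes from $x$ to a state $y$ with $A(y)=A(x)+\#(r\cap\Oh)$. The weight $V_1^{O_1(r)}\cdots V_{n-1}^{O_{n-1}(r)}$ brings the grading back to exactly $A(x)$. So $\sbc$ splits as a direct sum over Alexander gradings, and no quotient-complex or filtration reasoning is needed. This homogeneity is precisely what the paper's proof exploits.
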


\begin{proof}
    We use the representation of the simply blocked grid chain complex $\sbc$ from \cite[Rem. 4.6.13]{ozsvath2015grid}. Let $n$ be the grid size of $\G$ and let $A(x) = s$. Recall the action of the differential $\sbdifferential : \sbc \to \sbc$ on $x$
    \begin{gather*}
        \sbdifferential(x) = \sum_{y \in S(\G)} \sum_{\{r \in \eRect : r \cap \X = \emptyset, O_n(r) = 0\}} V_1^{O_1(r)} \cdots V_{n-1}^{O_{n-1}(r)} \cdot y.
    \end{gather*}
    The differential preserves the Alexander grading, a summand $V_1^{O_1(r)} \cdots V_{n-1}^{O_{n-1}(r)} \cdot y$ of $\sbdifferential(x)$ thus satisfies
    \begin{gather*}
        A(x) = A(V_1^{O_1(r)} \cdots V_{n-1}^{O_{n-1}(r)} \cdot y) = A(y) - \texttt{\#}(r \cap \Oh).
    \end{gather*}
    This implies $A(y) \geq A(x)$, by assumption, no grid state other than $x$ itself satisfies this condition. Since there can not exist a rectangle connecting $x$ with itself, it follows that $\sbdifferential(x) = 0$. The basis of $\widehat{\gc}_*(\G, s) = \bigoplus_d \widehat{\gc}_d(\G, s)$ are elements of the form $V_1^{k_1} \cdots V_{n-1}^{k_{n-1}} \cdot y$ such that $A(V_1^{k_1} \cdots V_{n-1}^{k_{n-1}} \cdot y) = s$. We conclude that $\widehat{\gc}_*(\G, s)$ is generated by the single element $x$ and that the differential $\sbdifferential$ is the zero map when restricted to $\widehat{\gc}_*(\G, s)$. For $\widehat{\gh}_*(K, s) = \bigoplus_d \widehat{\gh}_d(K, s)$ it follows
    \begin{gather*}
        \widehat{\gh}_*(K, s) = \frac{\ker \sbdifferential \cap \widehat{\gc}_*(\G, s)}{\im \sbdifferential \cap \widehat{\gc}_*(\G, s)} = \widehat{\gc}_*(\G, s).
    \end{gather*}
    In particular, $\dim(\widehat{\gh}_*(K, s)) = 1$ and $\widehat{\gh}_*(K, s') = 0$ for all $s' > s$. Due to the isomorphism of simply blocked grid homology and knot Floer homology $\widehat{\gh}_*(K, s) \cong \widehat{\hfk}_*(K, s)$ we conclude from the previous theorems that $K$ is fibered and $s$ is the Seifert genus of $K$.
\end{proof}

For brevity, we call such grid states \emph{maximal and unique}. It is natural to be interested in the converse statement of Proposition \nameref{prop: converse}.

\begin{question*}[i]
\phantomsection\label{qstn: central question}
    Does there exist a grid diagram with a maximal and unique grid state for every fibered knot?
\end{question*}

To analyze this question, we first establish an upper bound on the Alexander function in Section 1. Afterwards, in Section 2, we formulate an efficient method to check if a given grid diagram admits a grid state that realizes the upper bound for the Alexander function and is unique in that regard. Using these ideas, we create a simple Python program that searches for suitable grid diagrams of fibered knots in Section 3. We conclude the article with some remarks and a list of all the remaining unsolved fibered prime knots of crossing number $\leq 13$ in Section 4.

For a grid diagram of size $n$, there exist $n!$ different grid states. Using a brute-force approach to find a specific grid state thus requires calculating and comparing $n!$ gradings. The method we develop here requires at most $2n$ minimum detections on a matrix to decide if a grid diagram of size $n$ admits a grid state that is not only maximal and unique, but also realizes the upper bound of the Alexander function. The method also returns the unique grid state if it exists.

\addsec{Notation and Prerequisites}

This text addresses open problem 17.1.5 from \cite[Chapter 17]{ozsvath2015grid}. In particular, we work with the construction of simply blocked grid homology $\sbh$, as defined in \cite[Def. 4.6.12]{ozsvath2015grid}, the most important notions needed for the construction are listed below.

\begin{itemize}
    \item Planar grid diagrams \cite[Def. 3.1.1]{ozsvath2015grid} as well as toroidal grid diagrams \cite[Section 3.2]{ozsvath2015grid}. We denote toroidal grid diagrams by $\G$ and simply refer to them as grid diagrams. We call a planar representation of a toroidal grid diagram a planar realization.
    \item The set of grid states $S(\G)$ \cite[Def. 4.1.1]{ozsvath2015grid} of a grid diagram $\G$. Grid states are used to define the generators of the chain complex $\sbc$ of simply blocked grid homology.
    \item The set of rectangles $\Rect$ \cite[4.2]{ozsvath2015grid} between elements $x,y \in S(\G)$. Rectangles are needed to define the action of the differential $\sbdifferential$ of $\sbc$, cf. \cite[Rem. 4.6.13]{ozsvath2015grid}.
    \item The Alexander and Maslov functions $A: S(\G) \to \Z$ and $M: S(\G) \to \Z$ that induce the bigraded structure of $\sbh = \bigoplus_{d,s \in \Z} \sbhds{d}{s}$, cf. \cite[Section 4.3]{ozsvath2015grid}.
\end{itemize}

Throughout the text, all knots are assumed to be defined in the three-sphere $S^3$. 

\section{Alexander Function Upper Bound}

To define the upper bound of the Alexander function, we review its definition in terms of winding numbers, cf. \cite[Section 4.7]{ozsvath2015grid}.

\begin{definition}\label{def: winding number}
    Let $\G$ be a grid diagram of the knot $K$, let $p$ be a point on $\G$ that does not lie on the knot projection. The \emph{winding number} $\mathscr{w}(p)$ is an integer that describes how often the knot projection travels around $p$ counterclockwise. After choosing a planar realization of $\G$, the winding number at $p$ can be calculated by drawing a ray $\gamma$ emerging out of $p$ and then counting the number of intersections of $\gamma$ with the knot projection. An intersection where the knot projection travels counterclockwise increases $\mathscr{w}(p)$ by 1, clockwise intersections decrease $\mathscr{w}(p)$ by 1, cf. Figure \ref{fig: windingnumber}.
\end{definition}

\begin{figure}[tbp]
    \centering
    \includegraphics[width=12cm, height=5cm]{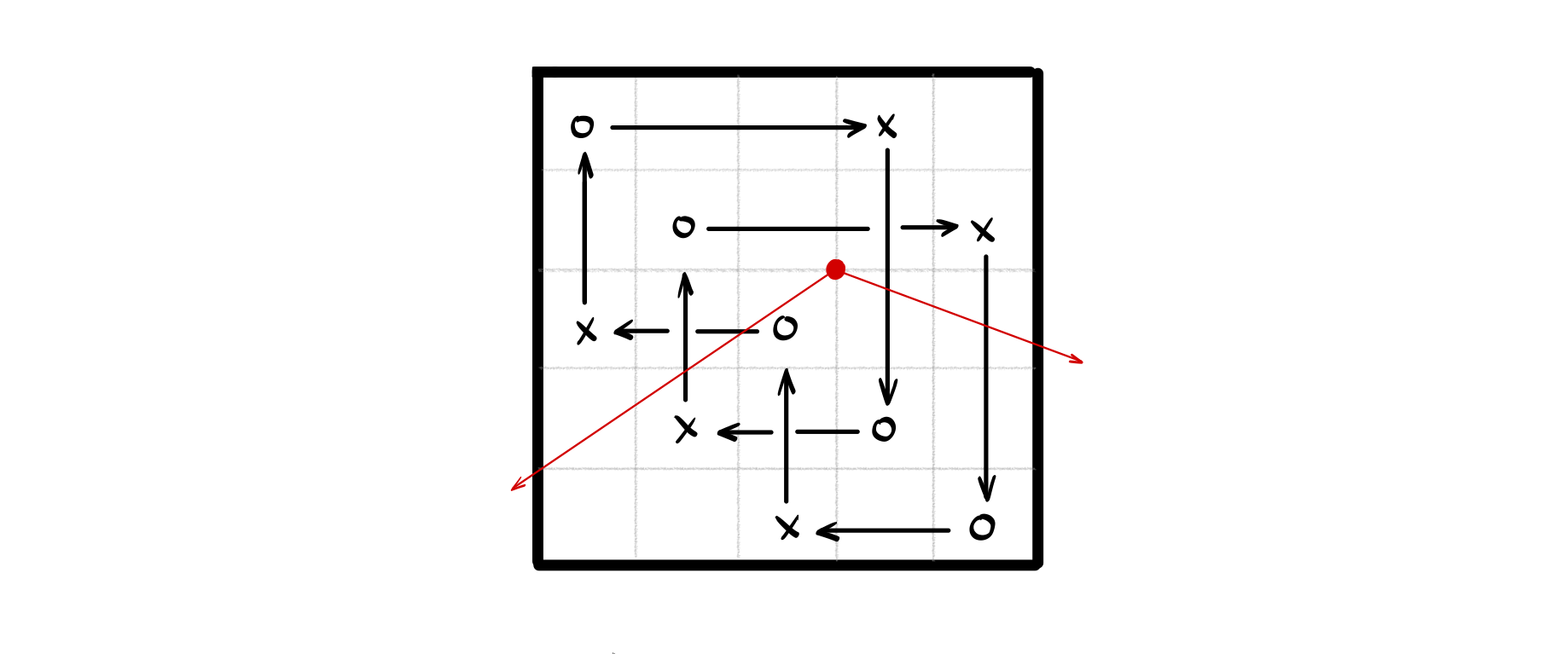}
    \caption{For this planar realization of a grid diagram, the winding number at the red point $p$ is -2. To see this, one can choose any ray emerging out of $p$ and count its signed intersection with the knot projection, as explained in Definition \ref{def: winding number}. Here, two possible rays are drawn, both intersect the knot projection twice in a clockwise direction.}
    \label{fig: windingnumber}
\end{figure}

\begin{definition}\label{def: alex grading alt def}
    Let $\G$ be a grid diagram of size $n$, then there are $2n$ small squares on the grid that are marked with either an $X$ or an $O$. Counting all four cornerpoints of all $2n$ squares gives a tuple $(p_1, \dots, p_{8n})$ of $8n$ lattice points on the grid, where it is possible that some points are counted more than once. The \emph{Alexander function} on a grid state $x = \{x_1, \dots, x_n\}$ is given by the equation
    \begin{gather*}\label{eq: Alexander}
        A(x) = -\sum_i \mathscr{w}(x_i) + \frac{1}{8} \sum_{j=1}^{8n} \mathscr{w}(p_j) - \left(\frac{n-1}{2}\right).
    \end{gather*}
\end{definition}

\begin{definition}
    Let $\G$ be a grid diagram of size $n$ and let $x = \{x_1, \dots, x_n\} \in S(\G)$. We define the \emph{winding function} $A' \colon S(\G) \to \Z$ as
    \begin{gather*}
        A'(x) = -\sum_{i = 1}^n \mathscr{w}(x_i).
    \end{gather*}
\end{definition}

$A(x)$ differs from $A'(x)$ only by the term $\frac{1}{8} \sum_{j=1}^{8n} \mathscr{w}(p_j) - (n-1)/2$ which is independent of $x$.

\begin{definition}
    The \emph{winding matrix} of a planar realization of a grid diagram $\G$ of size $n$ is an $n \times n$ matrix, whose entries are the winding numbers at the lattice points of $\G$, cf. Figure \ref{fig: grid state}.
\end{definition}

\begin{figure}[tbp]
    \centering
    \includegraphics[width=12cm, height=5cm]{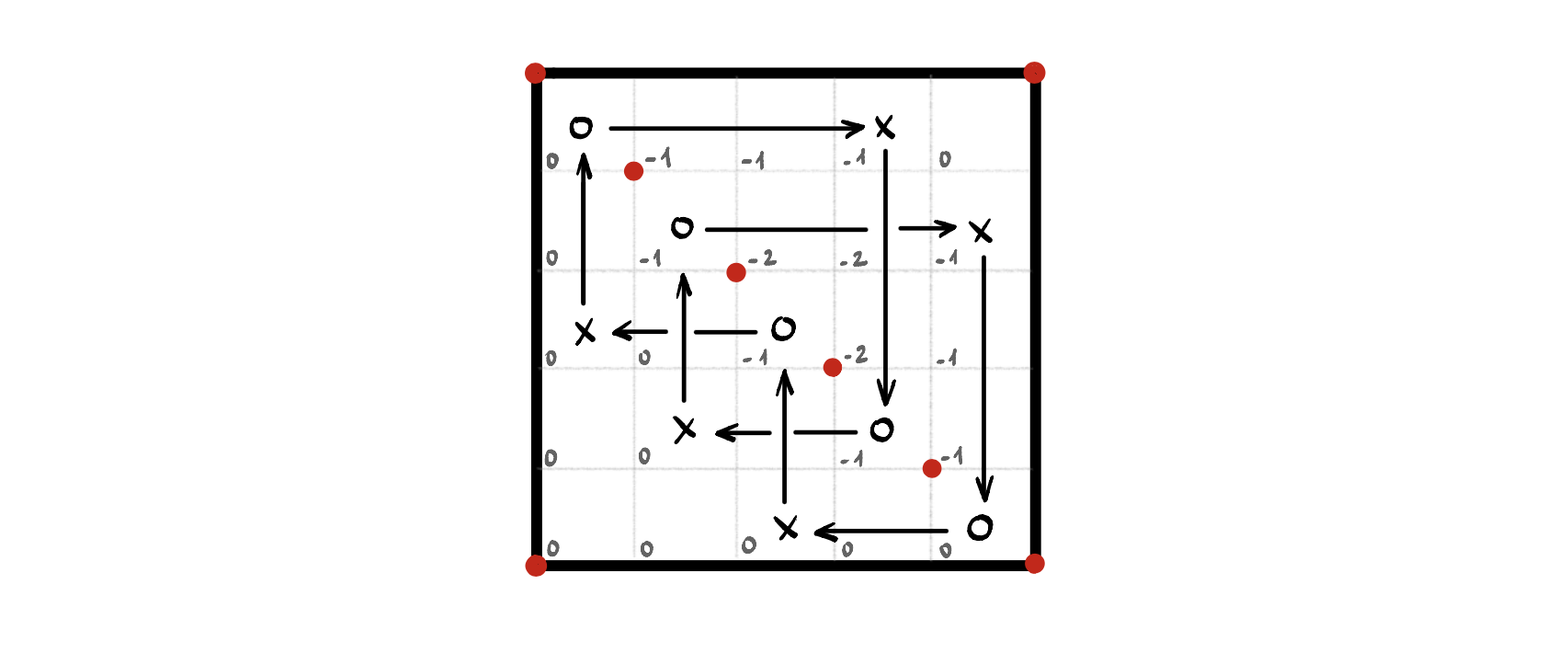}
    \caption{A planar realization of a grid diagram of the trefoil with a grid state visualized by red dots. Entries of the winding matrix are written in grey.}
    \label{fig: grid state}
\end{figure}

If we fix a planar realization of a grid diagram, we can label the vertical grid lines from left to right, and the horizontal grid lines from top to bottom. A grid state $x$ can be specified by a permutation $\sigma_x$ such that the point of $x$ on the $i$-th vertical line lies at the intersection with the $\sigma_x(i)$-th horizontal line. Using this notation, we see that the winding number of the point of $x$ in the $i$-th vertical line of the planar realization is therefore the $\sigma_x(i)$-th entry of the $i$-th column of the corresponding winding matrix, i.e., every row and every column of the winding matrix contains exactly one winding number of $x$.

\begin{definition}
    Given a grid diagram $\G$, let $M$ be the winding matrix of some planar realization of $\G$. Denote by $r_i$ the minimum of the $i$-th row of $M$, by $c_i$ the minimum of the $i$-th column of $M$. We call $\row = -\sum_i r_i$ the \emph{row number} of $\G$ and $\col = -\sum_i c_i$, the \emph{column number} of $\G$.
\end{definition}

As any two winding matrices of the same grid diagram only differ by a sequence of cyclic permutations of the rows and columns, it follows that $\row$ and $\col$ are independent of the chosen planar realization. The row- and column-numbers therefore give rise to the following upper bounds.

\begin{gather*}
    A'(x) \leq \min\{\row, \col\}\\
    A(x) \leq \min\{\row, \col\} + \frac{1}{8} \sum_{j=1}^{8n} \mathscr{w}(p_j) - (n-1)/2.
\end{gather*}

\begin{definition}
    We call a grid state $x$ \emph{perfect} if it realizes the upper bound for the Alexander function, i.e. if $A'(x) = \min\{\row, \col\}$. We call $x$ \emph{row-perfect} if $A'(x) = \row$ and \emph{column-perfect} if $A'(x) = \col$.
\end{definition}

Row- and column-numbers enable us to quickly check if certain grid states are maximal. Furthermore, in Proposition \ref{prop: idea} of the next section, we show that there exists an efficient method of analyzing if a grid diagram has a unique perfect grid state.

\begin{example} \label{ex: unique perfect grid state}
    The grid state in Figure \ref{fig: grid state} is the unique perfect grid state of this diagram. Indeed, for the winding matrix in the figure, we have $\row = \col = 6$. The grid state $x$, given by the purple dots, realizes this upper bound since $A'(x) = 6$. To see the uniqueness, note that any maximal grid state of this grid diagram must have its corresponding entry in the third row of the winding matrix $M$ positioned at the same spot, as this is the only point where the entry is minimal for this row. This forces the corresponding entry in the second and fourth rows to also be at the same spot as in the given grid state. Continuing like this, the entries in the first and, finally, the last row of $M$ are also fixed.
\end{example}

\section{Finding Unique Perfect Grid States}

Here, we formulate an efficient method of analyzing whether a grid diagram has a unique perfect grid state. We start with a technical definition.

\begin{definition}
    Let $\G$ be a grid diagram of size $n$, let $M$ be the winding matrix of a fixed planar realization of $\G$ with columns $c_1, \dots, c_n$. Let $x$ be a grid state described by the permutation $\sigma$. We say that $(M, \sigma)$ \emph{admits a loop} if there exist columns $c_{i_1} \dots c_{i_k}$ of $M$ and for each column an index $j_{i_1} \dots j_{i_k}$ such that $j_{i_r} \neq \sigma(i_r)$ and $M_{j_{i_r}, i_r} = M_{\sigma(i_r), i_r}$ for all $1 \leq r \leq k$. Additionally, we require that the sequence of matrix indices
    \begin{gather*}
        (\sigma(i_1), i_1), (j_{i_1}, i_1), (\sigma(i_2), i_2), (j_{i_2}, i_2), \dots, (\sigma(i_k), i_k), (j_{i_k}, i_k)
    \end{gather*}
    describes a loop in $M$, i.e. $j_{i_r} = \sigma(i_{r+1})$ for all $1 \leq r \leq k-1$ and $j_{i_k} = \sigma(i_1)$.
\end{definition}

\begin{example}
    The following $5 \times 5$ matrix $M$ admits a loop for the permutation $\sigma = (1,3,4,2,5)$. The first three columns contain suitable indices $j_1, j_2, j_3$. For $1 \leq i \leq 3$, the entries $M_{\sigma(i), i}$ are red and $M_{j_i, i}$ are blue.
    \begin{gather*}
        \begin{pmatrix}
            \color{red}{-1}  & 0                & \color{blue}{-1} & 2 & 0  \\
            0                & 1                & 0                & 0 & 0  \\
            \color{blue}{-1} & \color{red}{-2}  & 1                & 1 & -2 \\
            -1               & \color{blue}{-2} & \color{red}{-1}  & 3 & -2 \\
            1                & -1               & 3                & 1 & -3
        \end{pmatrix}
    \end{gather*}
\end{example}

The notion of a loop is useful, as the following lemma shows.

\begin{lemma}\label{lem: loop}
    Let $\G$ be a grid diagram of size $n$ and let $M$ be the winding matrix of a fixed planar realization. Let $x \in S(\G)$ be described by the permutation $\sigma$. If $(M, \sigma)$ admits a loop, then there exists another grid state $y \neq x$ such that $A(y) = A(x)$.
\end{lemma}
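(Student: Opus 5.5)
The plan is to build $y$ from $x$ by cyclically rerouting the entries of $\sigma$ along the loop, and then to check that the winding function does not change. The statement then follows because $A$ and $A'$ differ only by a constant.

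First I would reduce to the case where the loop columns $i_1,\dots,i_k$ are pairwise distinct, with $k\ge 2$. If some column repeats, say $i_r=i_s$ with $r<s$, then $\sigma(i_{r+1})=j_{i_r}$ and $\sigma(i_{s+1})=j_{i_s}$ (indices taken cyclically). For this to be consistent I read the indices $j$ as attached to the positions in the sequence. Cutting the loop at the repetition then yields a shorter loop, by the standard argument that a closed walk contains a cycle. The case $k=1$ cannot occur: it would require $j_{i_1}=\sigma(i_1)$, which is excluded.

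With distinct columns, I would define a new permutation $\tau$ by $\tau(i_r)=j_{i_r}=\sigma(i_{r+1})$ for $1\le r\le k-1$, $\tau(i_k)=j_{i_k}=\sigma(i_1)$, and $\tau(i)=\sigma(i)$ for every other $i$. So $\tau$ is $\sigma$ composed with the cyclic permutation $(i_1\, i_2\,\cdots\, i_k)$ of the columns. In particular $\tau$ is a bijection, and it describes a grid state $y$. We have $y\neq x$ because $\tau(i_1)=j_{i_1}\neq\sigma(i_1)$.

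Next I would compare $A'(y)$ with $A'(x)$ column by column. For columns outside the loop the contributions agree. For a loop column $i_r$, the loop condition gives $M_{\tau(i_r),i_r}=M_{j_{i_r},i_r}=M_{\sigma(i_r),i_r}$. Hence $A'(y)=-\sum_i M_{\tau(i),i}=-\sum_i M_{\sigma(i),i}=A'(x)$. Since $A-A'$ is independent of the grid state, it follows that $A(y)=A(x)$.

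The main subtlety is the reduction to a simple loop, that is, making sure the rerouted assignment is actually a permutation. Once the columns (and hence the rows $\sigma(i_r)$) are distinct, everything else is immediate.
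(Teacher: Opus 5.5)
Your proposal is correct and is essentially the paper's proof: the paper defines the same $\tau$ by replacing $\sigma(i_r)$ with $j_{i_r}$ on the loop columns, notes that the looping condition makes $\tau$ a permutation different from $\sigma$, and concludes $A(y)=A(x)$ from $M_{j_{i_r},i_r}=M_{\sigma(i_r),i_r}$. What you add is an explicit reduction to a loop with pairwise distinct columns, which the paper assumes without comment when it relabels the loop columns as $1,\dots,k$; your description of $\tau$ as $\sigma$ composed with a $k$-cycle also makes bijectivity clearer than the paper's remark that the $j_{i_r}$ are distinct.
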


\begin{proof}
    Let $c_{i_1}, \dots, c_{i_k}$ be the columns of $M$ with indices $j_{i_1}, \dots, j_{i_k}$ that create the loop for $(M, \sigma)$. To simplify the notation, assume $i_1 = 1, \dots, i_k = k$. We can then define the sequence
    \begin{gather*}
        \tau = (j_1, \dots, j_k, \sigma(k + 1), \dots, \sigma(n)).
    \end{gather*}
    By construction, $\tau$ is a permutation that is different from $\sigma$. Indeed the looping condition implies that $j_1, \dots, j_k$ are all different and $j_r \neq \sigma(r')$ for $1 \leq r \leq k$ and $r' \in \{k+1, \dots, n\}$. Let $y$ be the grid state of $\G$ with grid state permutation $\tau$. The claim $A(y) = A(x)$ then follows from the assumption $M_{j_r, r} = M_{\sigma(r), r}$.
\end{proof}

We can now formulate and prove an important proposition about the existence of unique perfect grid states.

\begin{proposition}\label{prop: idea}
    Let $\G$ be a grid diagram that has a unique column-perfect grid state. Let $M$ be the winding matrix of a fixed planar realization of $\G$. Then there exists a column of $M$ with a unique minimal entry. The analogous statement holds for row-perfect grid states.
\end{proposition}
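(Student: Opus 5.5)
The plan is to argue by contraposition: assume that every column of $M$ has at least two minimal entries, and use this to build a loop for the unique column-perfect grid state, so that Lemma \ref{lem: loop} produces a second column-perfect state.

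First I will unpack column-perfectness. Let $x$ be the column-perfect grid state, described by the permutation $\sigma$. Since $\mathscr{w}(x_i) = M_{\sigma(i),i} \geq c_i$ for every $i$, we have $A'(x) = -\sum_i M_{\sigma(i),i} \leq \col$. Equality holds exactly when $M_{\sigma(i),i} = c_i$ for all $i$. So a grid state is column-perfect if and only if, in every column, it occupies a minimal entry of that column.

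Now suppose, for contradiction, that no column of $M$ has a unique minimal entry. Then for each column $i$ I choose a row index $j_i \neq \sigma(i)$ with $M_{j_i,i} = c_i = M_{\sigma(i),i}$. I define a self-map $f$ of the row indices $\{1,\dots,n\}$ by $f(\sigma(i)) = j_i$. This is well defined because $\sigma$ is a bijection, and $f$ has no fixed points since $j_i \neq \sigma(i)$. Iterating $f$ on a finite set, some orbit must eventually repeat, so $f$ has a cycle $\sigma(i_1) \mapsto \sigma(i_2) \mapsto \dots \mapsto \sigma(i_k) \mapsto \sigma(i_1)$ with distinct elements, and $k \geq 2$ because there are no fixed points. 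Unwinding the definition of $f$, this reads $j_{i_r} = \sigma(i_{r+1})$ for $r<k$ and $j_{i_k} = \sigma(i_1)$. This is precisely the statement that $(M,\sigma)$ admits a loop.

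By Lemma \ref{lem: loop}, there is then a grid state $y \neq x$. Looking at its construction, $y$ agrees with $x$ outside the loop columns and uses the entries $M_{j_{i_r}, i_r}$ in the loop columns. All of these are column minima, so by the first step $y$ is also column-perfect (equivalently, $A'(y) = A'(x) = \col$). This contradicts the uniqueness of $x$. For row-perfect states I will apply the same argument to the rows, which amounts to transposing $M$ and replacing $\sigma$ by $\sigma^{-1}$. The only real step is extracting a cycle from the fixed-point-free map $f$ and checking that it matches the indexing in the definition of a loop, and I do not expect any serious obstacle there.
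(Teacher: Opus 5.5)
Your proposal is correct and follows essentially the paper's argument. The paper traces the chain $\sigma(1)\to j_1=\sigma(t)\to j_t\to\cdots$ via Algorithm~\ref{alg: new perm} until it closes up, then switches the entries along the resulting loop as in Lemma~\ref{lem: loop}; this is exactly a cycle of your fixed-point-free map $f$, which packages the same construction more cleanly (and you also check explicitly that the new state $y$ is column-perfect).
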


\begin{proof}
    We formulate the proof for column-perfect grid states; the row-perfect case works similarly. Let $n$ be the grid size of $\G$ and assume contrapositively that all columns of $M$ have more than one entry that is minimal for this column. If the diagram does not admit a column-perfect grid state, we are done, otherwise there exists a permutation $\sigma$ that represents a column-perfect grid state $x$. Additionally, for all columns $1 \leq i \leq n$ of $M$ there exists an index $j_i \neq \sigma(i)$ such that $M_{j_i,i} = M_{\sigma(i),i}$. If we manage to create a permutation $\tau$ different from $\sigma$ such that at each index $i$ we have $\tau(i) \in \{\sigma(i), j_i\}$, the claim follows. This can be achieved using Algorithm \ref{alg: new perm}.

    The Algorithm can be visualized as follows. We start by connecting the point of $x$ that corresponds to $\sigma(1)$ by a vertical line to the point on the grid diagram that corresponds to $j_1$. We then connect this point to the only point of $x$ on the same horizontal line, i.e., to the point represented by $\sigma(t)$ for some $1 \leq t \leq n, t \neq 1$, and again connect this point by a vertical line to the point corresponding to $j_t$. Continuing like this, two things can happen. If the piecewise linear curve meets a point of $x$ twice, a loop is created which specifies a new column-perfect grid state as explained in Lemma \ref{lem: loop}. Otherwise, the modified version of $\sigma$ is at some point again a permutation that is different from the original permutation, cf. Figure \ref{fig: three images}.
\end{proof}

\begin{figure}[tbp]
    \centering
    \includegraphics[width=12cm, height=5cm]{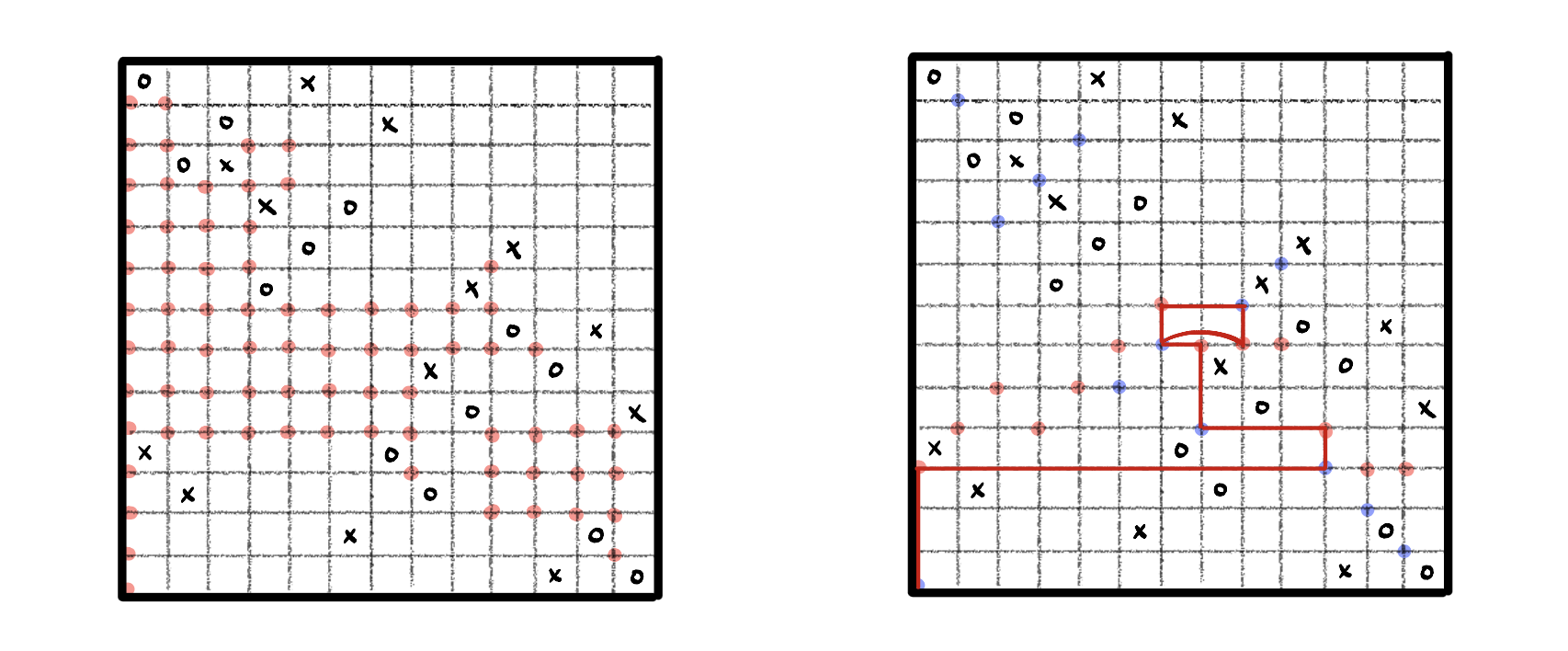}
    \caption{
        A visual explanation of Proposition \ref{prop: idea}.\\
        \textit{Left}: An example of a grid diagram where no column of the winding matrix has a unique minimal value. The red dots represent entries of the winding matrix that are minimal for this column.\\
        \textit{Right}: For each column, we choose two minimal entries (blue and red). The blue dots represent a column perfect grid state. Then we draw a curve (red) until a point of the grid state is visited twice. The closed loop, which is created from this curve, tells us which blue dots can be switched to red dots to create a new column perfect grid state.
    }
    \label{fig: three images}
\end{figure}

\begin{algorithm}
    \caption{New Perfect Grid State}\label{alg: new perm}
    \begin{algorithmic}[1]
        \Require A permutation $\sigma$ of length $n$ and a list $J$ of the same length,
        such that $J[i] \neq \sigma[i]$ for all $1 \leq i \leq n$.
        \Ensure A new permutation $\tau$ different from $\sigma$, such that at each index $i$ the new permutation has the value $\sigma(i)$ or $J(i)$.
        \State $n \gets \text{length}(\sigma)$
        \State $S \gets \text{copy}(\sigma)$
        \State $S[1] \gets J[1]$ \Comment{{\footnotesize Replace first element with value from $J$}}
        \State $i \gets 1$ \Comment{{\footnotesize Index of most recent change in $S$}}
        \State $changes \gets [\, i \,]$ \Comment{{\footnotesize List of modified indices.}}
        \While{true}
        \State $duplicates \gets \text{duplicate elements of S}$
        \If{$duplicates = \emptyset$}
        \Comment{{\footnotesize No duplicate found: the permutation is valid}}
        \State $\tau \gets \text{copy}(S)$
        \State \Return $\tau$
        \Else
        \State Let $t$ be the unique element in $duplicates$ with $t \neq i$
        \EndIf
        \If{$t \in changes$}
        \Comment{{\footnotesize Loop detected: modify only indices between $t$}}
        \State $start \gets \text{position of } t \text{ in } changes$
        \State $loop \gets \text{elements of } changes \text{ from position } start+1 \text{ onward}$
        \State $\tau \gets \text{copy}(\sigma)$
        \ForAll{$k \in loop$}
        \State $\tau[k] \gets J[k]$
        \EndFor
        \State \Return $\tau$
        \EndIf
        \State $S[t] \gets J[t]$ \Comment{{\footnotesize Update $S$ at t with the value from $J$}}
        \State $i \gets t$ \Comment{{\footnotesize Update current index to recently changed index}}
        \State Append $i$ to $changes$
        \EndWhile
    \end{algorithmic}
\end{algorithm}

The previous proposition can be extended to an efficient method for determining whether a grid diagram $\G$ admits a unique perfect grid state. Fix a planar realization of $\G$ with winding matrix $M$ and compute $\row$ and $\col$. If one of the values is strictly bigger, there can only exist one type of perfect grid state, say column-perfect grid states. We then check whether there exists a column of $M$ with a unique minimal entry. If not, we immediately know from Proposition \ref{prop: idea} that $\G$ does not have a column-perfect grid state and thus no perfect grid state at all. If such a column exists, we save the unique minimal entry, then delete the column and row of $M$ containing that entry. For the new reduced matrix, three things can happen:

\begin{enumerate}
    \item[-] There are columns that now no longer have their original minimum as an entry. In that case, we know that $\G$ does not admit a column-perfect grid state.
    \item[-] For all remaining rows, there does not exist a column that only has one entry with minimal value. If this happens, we know that $\G$ does not admit a unique column-perfect grid state by the same argument as in Proposition \ref{prop: idea}. It could be that $\G$ admits several perfect grid states or none.
    \item[-] There again exists at least one column that has a unique entry with the original minimal value of the column. Here, we can continue as before by extracting the original position of the new unique minimal entry and then reducing the matrix again.
\end{enumerate}

This process either stops after finding a unique column-perfect grid state, or it stops if, after some iteration, case 1 or 2 happens. In the case that $\row = \col$, we do have to perform this method for columns and rows. A downside of this procedure is that it cannot tell grid diagrams that admit more than one column-perfect grid state apart from those that admit no column-perfect grid state. This would be interesting to know, since if $\G$ admits no column-perfect grid state, it may still have a maximal and unique grid state. An example of this is given in Figure \ref{fig: max unique, not perfect}.

\begin{example}
    The grid state $x$, shown by the red dots in Figure \ref{fig: max unique, not perfect}, is maximal and unique but not perfect. Indeed, it is not perfect since $A'(x) = 13$ whereas $\row = 14 = \col$. Note that the 5th and 6th rows of the corresponding winding matrix have their unique minimal entry at the same column; it is thus impossible to have a row-perfect grid state. Analogously, the 3rd and 4th column have their unique minimal entry in the same row. Therefore, column-perfect grid states also cannot exist, and we see that $x$ is maximal. Any other maximal grid state must have its corresponding values on the winding matrix of the 3rd or 4th column on the 5th row; this forces the entries of the 2nd, 5th, 6th, 7th, and 8th columns to be in the same spot as for $x$. It is now easy to see that $x$ is a unique maximal grid state.
\end{example}

\begin{figure}[tbp]
    \centering
    \includegraphics[width=12cm, height=6cm]{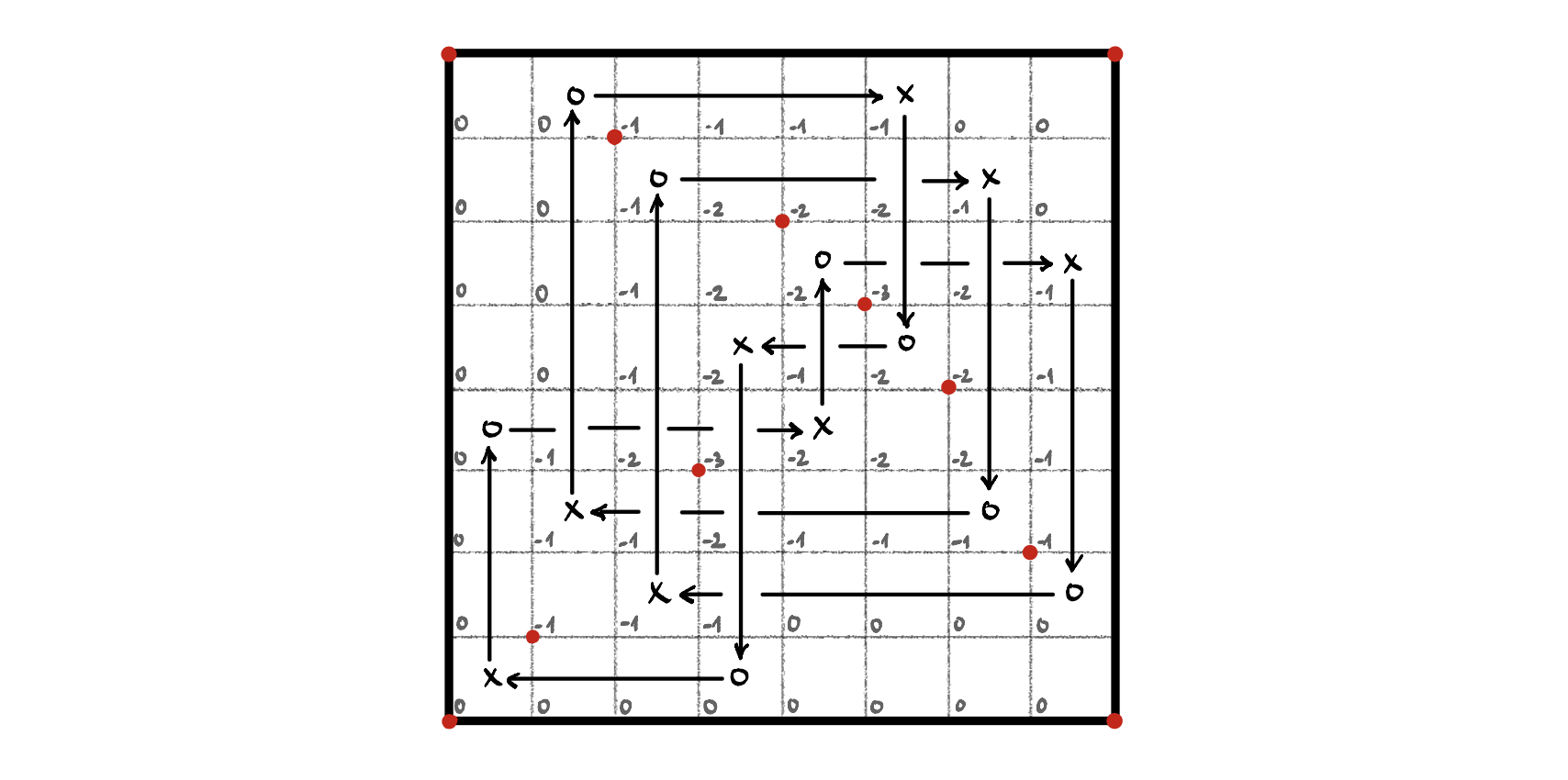}
    \caption{An example of a grid diagram with a grid state that is maximal and unique but not perfect.}
    \label{fig: max unique, not perfect}
\end{figure}

While our method for proving the uniqueness of maximal grid states is fast when the grid state is perfect, we can see from the previous example that it is much more tedious to show uniqueness when the grid state is not perfect. In Figure \ref{fig: max unique, not perfect}, the grid state $x$ is almost perfect in the sense that $A'(x) + 1 = \min\{\row, \col\}$. In general, the bigger the difference between $\min\{\row, \col\}$ and $A'(x)$, the less practical the method becomes. We thus focus on grid diagrams of knots that admit a unique perfect grid state. For convenience, we introduce the following definition.

\begin{definition}
    Given a grid diagram $\G$ of a knot $K$, we call $\G$ \emph{nice}, if there exists a unique perfect grid state for $\G$.
\end{definition}

Instead of working on Question \nameref{qstn: central question}, we reformulate it into:

\begin{question}\label{qstn: updated central question}
    Does there exist a nice grid diagram for every fibered knot?
\end{question}

\section{A Python programm to Find Nice Grid Diagrams}

This last chapter explains the Python program \emph{griddiagrams} \cite{griddiagrams} that was developed based on the ideas of the previous chapters. Its purpose is to find nice grid diagrams of fibered knots by creating a large list of candidate diagrams that represent the input knot. Here we explain the key functions; further documentation can be found in the source code.

Recall from \cite[Section 3.1.2]{ozsvath2015grid} that grid moves, i.e., commutations and (de)-stabilizations, are modifications of grid diagrams that do not change the underlying knot type. They can be used to create a large list of diagrams of the same knot. For each grid diagram, we want to apply the efficient method from the previous section to find a unique perfect grid state. While this approach is very brute force, most of the required calculations can be implemented very efficiently.

We use the knot database provided from \emph{https://knotinfo.org} \cite{knotinfo}. It contains 5397 fibered prime knots and, conveniently, all of the knots in this database are already equipped with a minimal grid diagram representation. A grid diagram is given in so-called \emph{grid-notation}, which is a list of tuples that specify the positions of the $X$- and $O$-markings. For example, the diagram of size 5 from Figure \ref{fig: grid gives link} would be given by the following sequence of 10 tuples
\begin{gather*}
    (1,1), (1,3), (2,2), (2,4), (3,3), (3,5), (4,1), (4,4), (5,2), (5,5).
\end{gather*}

\begin{figure}[tbp]
    \centering
    \includegraphics[width=12cm, height=5cm]{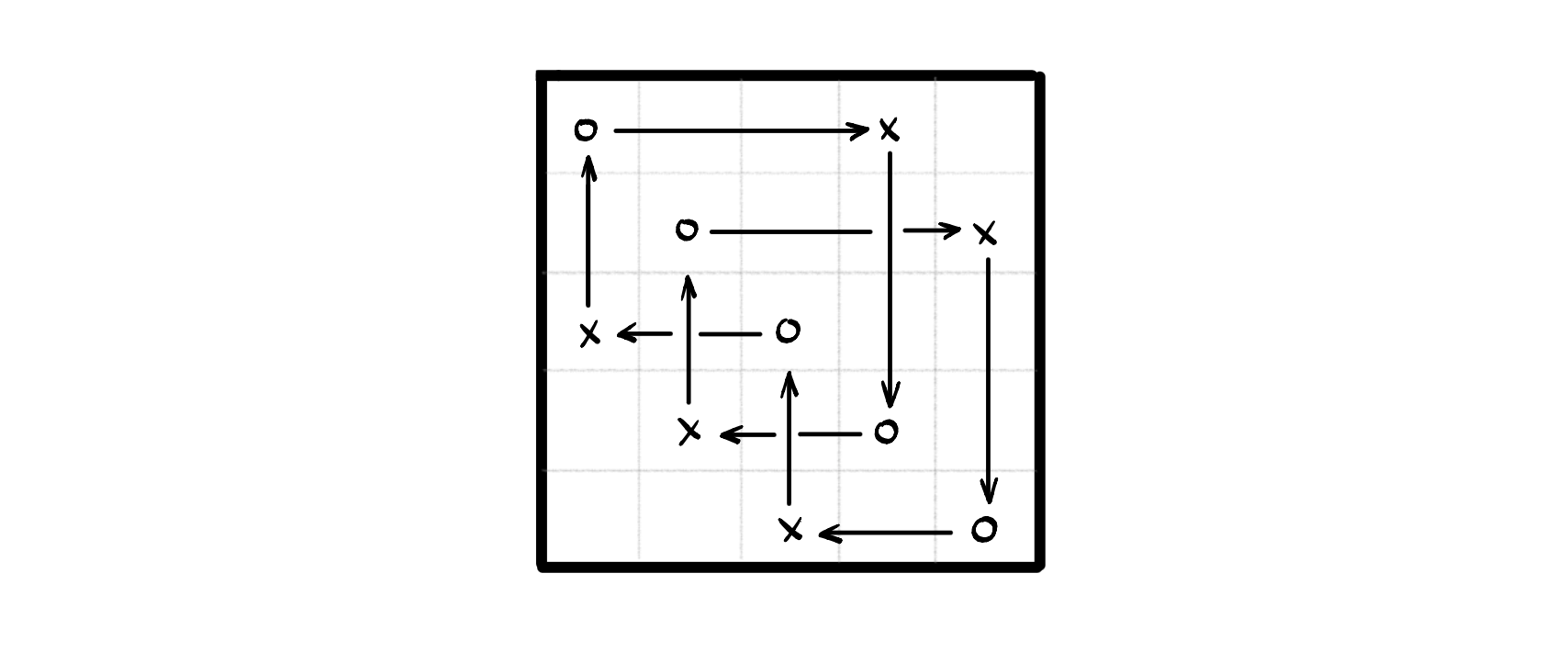}
    \caption{A planar grid diagram of the trefoil knot.}
    \label{fig: grid gives link}
\end{figure}

In this notation, the orientation of the grid diagram is not specified. In order to perform column-commutations efficiently, it is much more practical to describe grid diagrams by vertical segments. In this description, Figure \ref{fig: grid gives link} is specified by 5 oriented vertical intervals
\begin{gather*}
    (3,1), (4,2), (5,3), (1,4), (2,5).
\end{gather*}
We call this the \emph{vertlist-notation}. The Python program transforms grid-notation into vertlist-notation in two steps, by first transforming grid-notation into an intermediate format with the function \lstinline[style=python]{gridnotation_to_gridlist()}, and then transforming this format into vertlist-notation using \lstinline[style=python]{vlist()}.

The orientation that is chosen during this transformation process does not matter. Reversing the placements of the $X$- and $O$-markings on a grid diagram amounts to reversing the orientation of the underlying knot. This is implemented in the function \lstinline[style=python]{rev()}, which is used later to look for unique perfect grid states in both orientations. In vertlist-notation, a simple function, \lstinline[style=python]{can_commute()}, is enough to look for possible column-commutations. To efficiently perform row-commutations, we describe the grid diagram using its vertical segments. We call this the \emph{horzlist-notation}. Another simple Python function, \lstinline[style=python]{v_to_h()}, transforms vertlist- into horzlist-notation, and we again use \lstinline[style=python]{can_commute()} to check if neighbouring rows can be commuted. The next step is to take a grid diagram given as a vertlist and compute a list of all grid diagrams obtainable after one row- or column commutation. This is done using \lstinline[style=python]{c_move()}. As we are thinking of the grid diagram embedded on the torus, this function allows column commutations between the first and last vertical segment as well as row commutations between the highest and lowest row. 

Another benefit of describing grid diagrams in vertlist-notation is that calculating the winding matrix is straightforward; cf. \lstinline[style=python]{w_matrix()}, which was inspired by the program Gridlink \cite{GridLink}. On a matrix of a knot, we can look for unique row- and column-perfect grid states using the method derived from Proposition \ref{prop: idea}. The function \lstinline[style=python]{h_type_0_permutation()}\footnote{The description "h-type-0" stands for grid states x whose difference $\mathscr{r}-A'(x)$ is zero.} finds unique row-perfect grid states. Similarly, we define \lstinline[style=python]{v_type_0_permutation()} for unique column-perfect grid states. According to Proposition \nameref{prop: converse}, the Alexander grading of a unique perfect grid state coincides with the Seifert genus of the knot. The program thus also includes \lstinline[style=python]{a_grading()}, a function that calculates the Alexander grading of a grid state to verify whether the unique perfect grid state the code finds satisfies this necessary condition. All of this is combined in \lstinline[style=python]{try_permutations()}, which is the function that tries to find a unique perfect grid state for a given diagram.

We now have all the building blocks to create a first breadth-first search (BFS) algorithm, \lstinline[style = python]{gridstate_finder_commute()}, that looks for a nice grid diagram by creating a large list of representations using only commutation moves. If the algorithm is successful, it results in a nice grid diagram of minimal size since commutation moves do not change the grid size. This approach successfully finds nice grid diagrams for 4346 of the 5397 fibered prime knots.

For the remaining knots, we use a similar approach after performing a stabilization. Any stabilization can be obtained by a stabilization of type X-SW followed by a sequence of commutations \cite[Cor. 3.2.3]{ozsvath2015grid}. Analogously, it is also enough to perform only stabilizations of type X-NW in combination with commutations. The function \lstinline[style = python]{x_nw()} performs an X-NW stabilization of a grid diagram in vertlist-notation on a specified segment. The second BFS-algorithm, \lstinline[style=python]{gridstate_finder_stab()}, then tries to find a nice diagram after performing one stabilization. This algorithm is noticeably slower than \lstinline[style = python]{gridstate_finder_commute()}, which is most likely caused by the fact that a stabilization introduces a segment of length one that can be commuted with everything, thus increasing the number of possible commutations drastically. Nevertheless, it successfully finds nice grid diagrams for 1039 of the remaining knots. Only 12 fibered prime knots with crossing number $\leq 13$ are left unsolved.

\section{Final Remarks and Questions}

The remaining fibered prime knots with crossing number $\leq 13$ for which no nice grid diagram has been found are listed in Table \ref{tab: unsolved_knots}.

\begin{table}[tbp]
    \renewcommand{\arraystretch}{1.3}
    \begin{tabularx}{\textwidth}{|>{\centering\arraybackslash}m{3cm}|>{\centering\arraybackslash}X|}
        \hline
        \textbf{Crossing number} & \textbf{Unsolved knots}                                              \\
        \hline
        $\leq 11$                & $\emptyset$                                                          \\
        \hline
        12                       & $12n_{79}$, $12n_{168}$                                              \\
        \hline
        \multirow{2}{*}{13}      & $13n_{282}$, $13n_{917}$, $13n_{1279}$, $13n_{1281}$, $13n_{1413}$,  \\
                                 & $13n_{1826}$, $13n_{2915}$, $13n_{3089}$, $13n_{3904}$, $13n_{3932}$ \\
        \hline
    \end{tabularx}
    \caption{Unsolved knots by crossing number}
    \label{tab: unsolved_knots}
\end{table}

Coincidentally, for all unsolved knots, the minimal grid size coincides with the crossing number. In the full dataset, 35\% of the knots have a minimal grid size of 14 or 15. For all of those knots, nice grid diagrams have been found, cf. Figure \ref{fig: nice grids} for two examples of nice grid diagrams of grid size 16, i.e., these are examples of knots of minimal grid size 15 where the code only found a nice grid diagram after stabilizing once.

\begin{figure}[tbp]
    \centering
    \includegraphics[width=6.5cm, height=6.5cm]{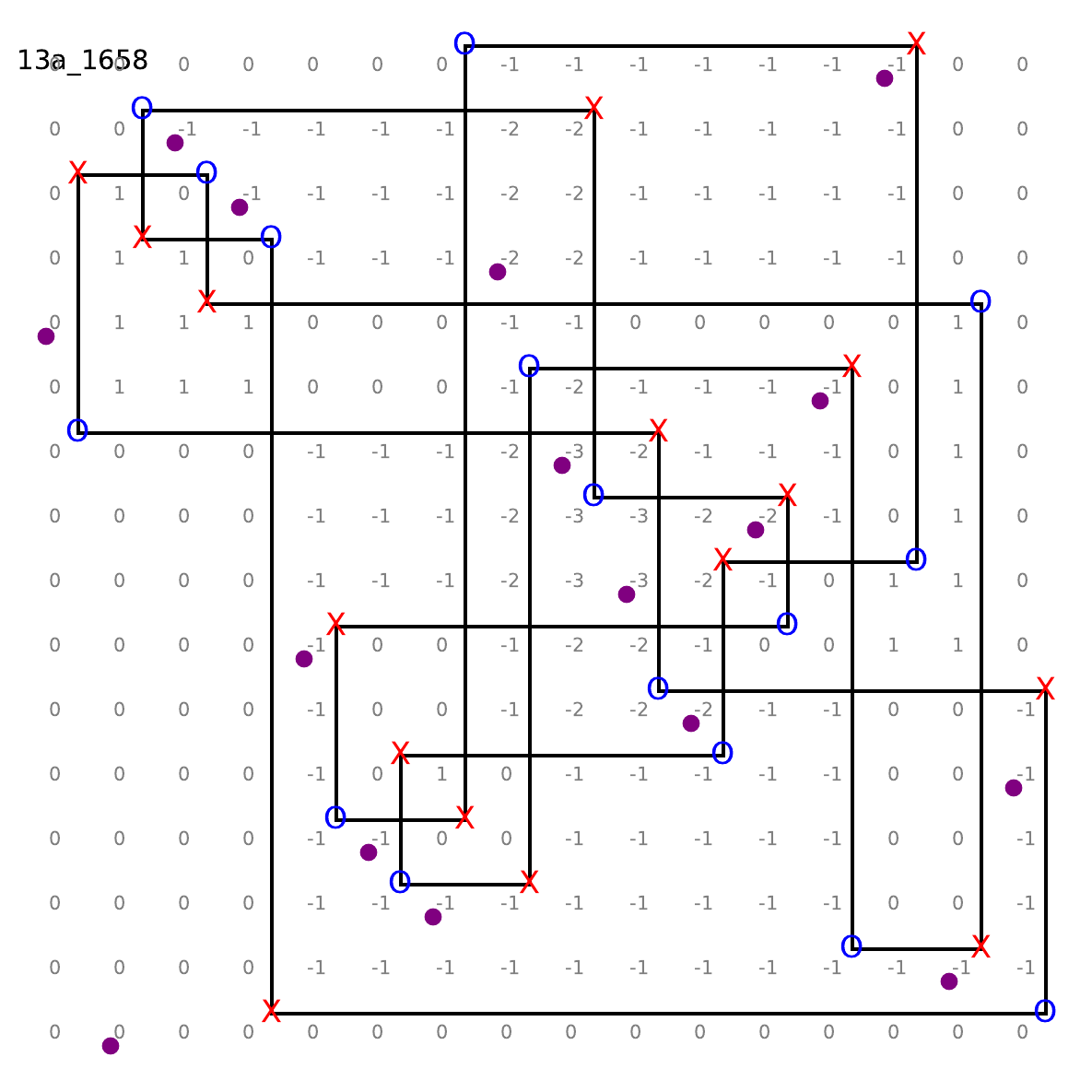}
    \includegraphics[width=6.5cm, height=6.5cm]{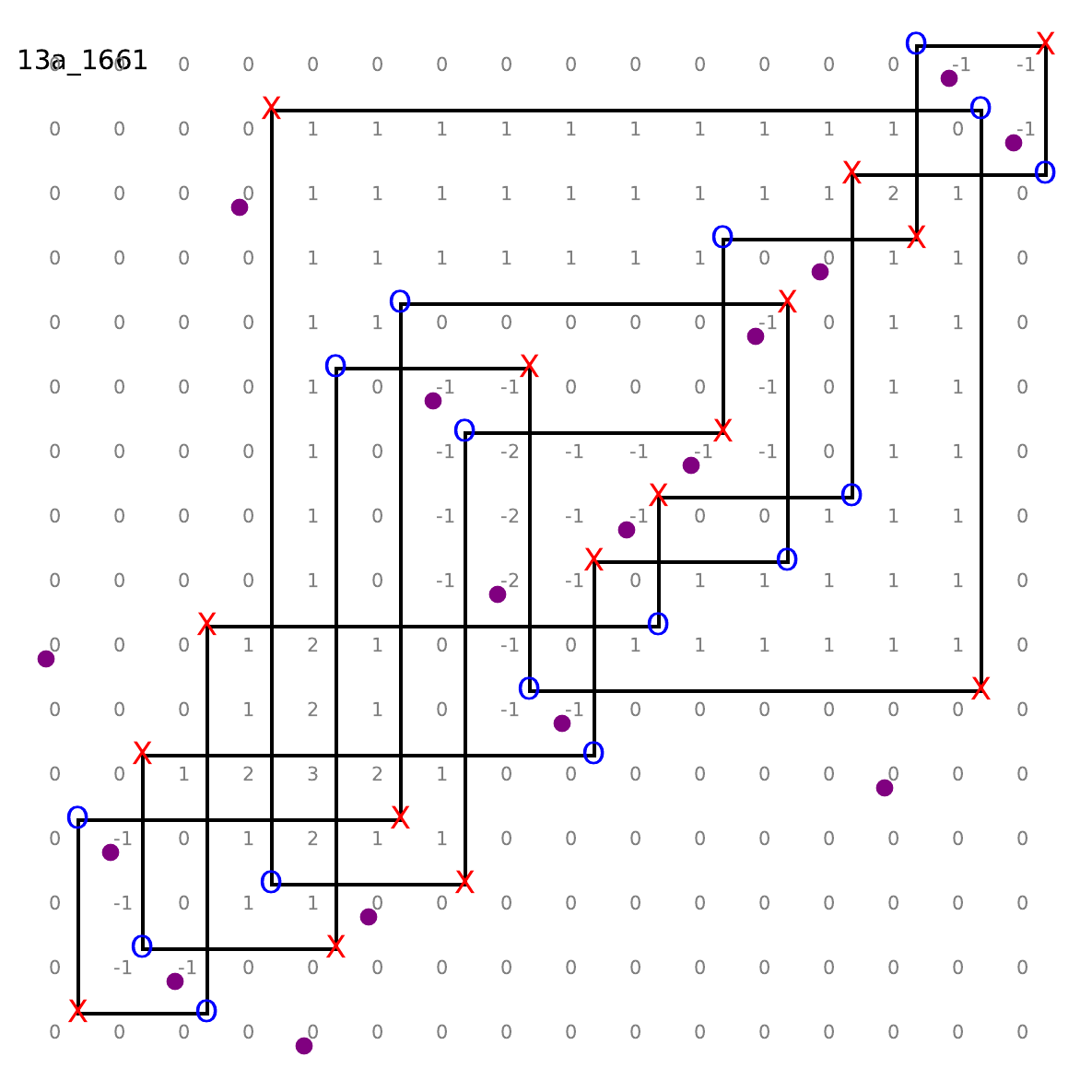}
    \caption{
        Two examples of nice grid diagrams for fibered prime knots of minimal grid size 15 where one stabilization has been performed. The purple dots represent the corresponding perfect grid state.
    }
    \label{fig: nice grids}
\end{figure}

For the unsolved knots, introducing a second stabilization did not yield any nice grid diagrams. However, to properly investigate grid diagrams where more than one stabilization has been performed, it would be necessary to make the algorithms much more efficient. One possibility is to use some form of multiprocessing. Within the project, multiprocessing has already been used to find nice grid diagrams for several knots simultaneously. Nevertheless, for the most time-consuming knots, a parallelized version of the BFS algorithms would be much faster. It remains an interesting task to create more efficient versions of our code. As of right now, the functionalities of the code are also limited; some of the missing features are:

\begin{itemize}
    \item[-] The code is only able to perform stabilizations of type $X-NW$. While this is sufficient in theory, introducing additional stabilization types may drastically reduce the number of necessary commutations.
    \item[-] The code does not destabilize diagrams.
    \item[-] We are not looking at 2 or more stabilizations.
\end{itemize}

Even without introducing destabilizations and using at most one stabilization on a minimal grid diagram, this simple Python program was able to find nice diagrams for 5384 of the 5397 fibered prime knots in the database. This suggests the following question.

\begin{question}
    Does every knot that admits a grid diagram with a unique maximal grid state also admit a nice grid diagram?
\end{question}

This project has led to many more interesting questions, for example:

\begin{itemize}
    \item[-] For those knots where nice grid diagrams were found only after stabilizing, do there exist nice grid diagrams of minimal grid size? We did not introduce destabilizations in the package. This may be related to the question of whether all grid diagrams of a knot of the same size are connected using commutations only, or if it is necessary to stabilize and destabilize. Dynnikov proved that any grid diagram of the unknot can be reduced to the simplest diagram without stabilizing \cite{dynnikov2006arc}. However, the documentation of the program Gridlink \cite{GridLink} points to the appendix of \cite{menasco2006addendum} for an example of a grid diagram of a knot that can not be reduced without stabilizing.
    \item[-] Assuming a knot has a minimal grid diagram with a unique maximal grid state as well as a nice grid diagram, does there exist a minimal nice grid diagram? So far, the code has not found a minimal nice grid diagram for $9_{42}$, but Figure \ref{fig: max unique, not perfect} is an example of a minimal grid diagram for $9_{42}$ with a unique maximal grid state.
\end{itemize}

We end this project with a list of remarks.

\begin{itemize}
    \item[-] Now that nice grid diagrams have been found for a large list of fibered prime knots, it might be interesting to use machine learning methods in order to find more efficient methods of finding nice grid diagrams. As of now, there is no measure of how far a grid diagram is from being nice; maybe machine learning methods can even help find such a metric.
    \item[-] The package checks if the Alexander grading of the unique perfect grid state coincides with the 3-genus of the input knot. According to Lemma \nameref{prop: converse}, this has to be the case. For all knots where a nice grid diagram has been found, the grading of the grid state coincided with the genus of the knot. This is a very strong sign that the code is producing correct results. Additionally, one can compare the homology ranks of the resulting grid diagram with the original knot using, for example, Gridlink \cite{GridLink}, to further ensure that the calculations are correct. This has been done on a sample basis for several grid diagrams, and the ranks have always been correct.
\end{itemize}

\newpage

\printbibliography

\end{document}